\documentclass[12pt]{article}
\usepackage{mathrsfs}
\usepackage{amsthm}
\usepackage{amssymb}
\usepackage{latexsym}
\usepackage{amsmath,amsfonts}
\usepackage{cases}
\usepackage{bm}
\usepackage{indentfirst}
\usepackage{xcolor}
\usepackage{ifpdf}
\usepackage{graphicx}
\usepackage{epstopdf}
\usepackage{epsfig}
\usepackage{psfrag}
\usepackage{enumitem}
\usepackage{verbatim}
\usepackage{color}
\usepackage{subfigure}

\usepackage[
pdfauthor={Lu},
pdftitle={matching covered},
pdfstartview=XYZ,
bookmarks=true,
colorlinks=true,
linkcolor=blue,
urlcolor=blue,
citecolor=blue,
bookmarks=true,
linktocpage=true,
hyperindex=true
]{hyperref}

\usepackage[textwidth=16cm, textheight=22cm, includehead, includefoot]{geometry}

\title{The maximum number of edges in minimal matching covered graphs \footnote{E-mail address: xiaolinghe99@163.com (X. He).} }
\author{ Xiaoling He $^1$\\
\small {1. School of Mathematics and Statistics, Lanzhou University, Lanzhou, China}}

\date{}

\newtheorem{lem}{Lemma}[section]
\newtheorem{thm}[lem]{Theorem}
\newtheorem{cor}[lem]{Corollary}

\newtheorem{pro}[lem]{Proposition}

\newtheorem*{remark}{Remark}

\begin{document}
\bibliographystyle{plain}
\newcommand{\udots}{\mathinner{\mskip1mu\raise1pt\vbox{\kern7pt\hbox{.}}
\mskip2mu\raise4pt\hbox{.}\mskip2mu\raise7pt\hbox{.}\mskip1mu}}
\maketitle
\begin{abstract} 
 A connected  graph $G$ with at least two vertices is {\em matching covered} if each of its edges lies in a perfect matching.  
A matching covered graph is {\em minimal} if the removal of any edge results in a graph that is no longer matching covered. 
Lov\'asz and Plummer [J. Combin. Theory, Ser. B  23 (1977) 127--138] proved by ear decompositions that every minimal matching covered bipartite graph $G$ different from $K_2$ has at most  $(3|V(G)|-6)/2$ edges, and this bound is sharp for all $|V(G)|\ge4$.   
In this paper, we prove that every minimal matching covered nonbipartite graph $G$ with at least 6 vertices has at most $5(|V(G)|-2)/2$ edges, and this bound is sharp for all $|V(G)|\ge6$.  
\\
\par {\small {\it Keywords:} minimal matching covered graph; minimal bicritical graph; tight cut decomposition}
\end{abstract}
\vskip 0.2in \baselineskip 0.1in

\section{Introduction}

Graphs considered in this paper may have multiple edges, but no loops. We follow \cite{BM08} for undefined notation and terminology.
Let $G$ be a graph with the vertex set $V(G)$ and the edge set $E(G)$.
For a vertex $u\in V(G)$, the {\em degree} of $u$ in $G$, denoted by $d_G(u)$ or simply $d(u)$, is the number of edges of $G$ incident with $u$. 
We denote by $\delta(G)$ the {\em minimum degree} of $G$.

An edge $e$ of a graph $G$  is {\em allowed} if there exists some perfect matching of $G$ containing $e$.
A connected nontrivial graph $G$ is {\em matching covered} if each of its edges is allowed.    
We say that an edge $e$ in a matching covered graph $G$ is {\em removable} if $G-e$ is matching covered.  
We say that a matching covered graph $G$ is {\em minimal} if $G-e$ is not a matching covered graph for any edge $e$ in $G$, equivalently, $G$ contains no removable edges.
 
For a minimal matching covered bipartite graph\footnote{Lov\'{a}sz and Plummer used the terminology ``minimal elementary bipartite graph'' in \cite{LP77}.} $G$ different from $K_2$, Lov\'{a}sz and Plummer \cite{LP77} presented an upper bound for $|E(G)|$.  
\begin{thm}[\cite{LP77}]\label{thm:minimal-bi-MC}
    Every minimal matching covered bipartite graph $G$ different from $K_2$  has at most $(3|V(G)|-6)/2$ edges. 
\end{thm}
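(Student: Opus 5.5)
We prove Theorem~\ref{thm:minimal-bi-MC} by induction on the number of ears in a bipartite ear decomposition. Recall the classical fact (Hetyei; Lov\'asz--Plummer) that a bipartite graph $G$ is matching covered if and only if it has a \emph{bipartite ear decomposition} $G_0=K_2\subset G_1\subset\cdots\subset G_r=G$. Here each $G_i$ is obtained from $G_{i-1}$ by adding an odd path $P_i$ (an ear) whose two ends lie in different colour classes of $G_{i-1}$ and whose internal vertices are new. Each $G_i$ is then matching covered. Counting vertices and edges, if ear $P_i$ has $\ell_i$ edges, it contributes $\ell_i-1$ new vertices and $\ell_i$ new edges. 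Hence
\[
|E(G)| = 1+\sum_{i=1}^r \ell_i,\qquad |V(G)| = 2+\sum_{i=1}^r(\ell_i-1),
\]
so $|E(G)|-|V(G)| = r-1$. The bound $|E(G)|\le (3|V(G)|-6)/2$ is therefore equivalent to $r\le (|V(G)|-4)/2+1=(|V(G)|-2)/2$. In other words, we must show that a minimal graph admits an ear decomposition with few ears.

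The key step is to show that in a minimal matching covered bipartite graph, every ear after the first has length at least $3$. More precisely, we want an ear decomposition in which \emph{no} ear is a single edge. The reason is that if the last ear $P_r$ is a single edge $e$, then $G-e=G_{r-1}$ is matching covered, so $e$ is removable, contradicting minimality. The difficulty is that single-edge ears may occur in the middle of a decomposition. We handle this with the standard reordering lemma: an ear decomposition can be rearranged so that all single-edge ears come last. The justification is that a single edge $P_i$ joining two vertices of $G_{i-1}$ can be postponed to the end. Its endpoints still lie in opposite classes, and the later ears do not use it as a structural part. One checks that each intermediate graph remains matching covered, because adding an ear with ends in opposite classes preserves the property, and the edge is allowed in the final graph anyway. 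Applying this lemma to a decomposition of the minimal graph $G$, the last ear would be a single edge unless there are none. Since $G\neq K_2$, we have $r\ge1$, and minimality forces every ear to satisfy $\ell_i\ge 3$.

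Now $|V(G)|-2=\sum(\ell_i-1)\ge 2r$, which gives $r\le(|V(G)|-2)/2$. This seems insufficient at first, since the equivalence above requires exactly this inequality, and indeed it suffices. Still, we double-check the first ear. The ear $P_1$ attached to $K_2$ must have length $\ge3$, so together with $G_0$ it forms an even cycle, which is a genuine start. The computation $|E|=|V|+r-1\le |V|+(|V|-2)/2-1=(3|V|-6)/2$ then completes the proof.

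The main obstacle is the reordering step, namely justifying that single-edge ears can be moved to the end while keeping a valid bipartite ear decomposition. If that proves delicate, the fallback is the following argument. Take an ear decomposition with the maximum possible number of vertices covered before the first single-edge ear. Then observe that deleting that single edge $e$ from $G$ leaves a graph which has the same ear decomposition with $e$ omitted. This remains valid because later ears attach only to vertices, not edges, and the bipartition of $G_{i}-e$ is unchanged. Consequently $G-e$ is matching covered, so $e$ is removable, a contradiction.
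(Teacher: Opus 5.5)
The paper gives no proof of this statement; it only cites Lov\'asz--Plummer's ear-decomposition argument. Your route is therefore the expected one, but your conclusion does not follow, because of an off-by-one arithmetic error in two places.

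First, from $|E(G)|=|V(G)|+r-1$, the target $|E(G)|\le(3|V(G)|-6)/2$ is equivalent to $r\le(|V(G)|-4)/2$, not $r\le(|V(G)|-2)/2$. Second, your closing computation gives $|V|+(|V|-2)/2-1=(3|V|-4)/2$, not $(3|V|-6)/2$. So your argument actually proves $|E(G)|\le(3|V(G)|-4)/2$, with equality exactly when every ear has length $3$. Your remark ``this seems insufficient at first'' was justified.

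The part showing that no ear $P_i$ ($i\ge1$) is a single edge is fine. Your ``fallback'' is the clean version: deleting such an ear from the sequence gives a bipartite ear decomposition of $G-e$, so $e$ is removable. The reordering discussion and the extremal choice of decomposition are unnecessary.

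The missing step is to exclude the case in which all ears have length $3$, and this needs $|V(G)|\ge 6$. The $4$-cycle $C_4$ is a minimal matching covered bipartite graph with $4>(3\cdot 4-6)/2$ edges. So the statement as written is false at $|V(G)|=4$, as is the paper's sharpness claim there, and no proof can avoid an extra hypothesis.

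For $|V(G)|\ge 6$ the gap closes as follows.
\begin{enumerate}
\item Suppose $|E(G)|=(3|V(G)|-4)/2$. Then $r=(|V(G)|-2)/2\ge 2$ and every ear has length $3$, so $G_1$ is a $4$-cycle.
\item In a $4$-cycle any two vertices of opposite colours are adjacent. Hence $G_2$ consists of three internally disjoint paths of lengths $1,3,3$ joining the ends of an edge $g$, and $G_2-g$ is a $6$-cycle, i.e.\ $K_2$ plus one ear of length $5$.
\item Then $K_2\subset G_2-g\subset G_2\subset G_3\subset\cdots\subset G_r=G$ is a bipartite ear decomposition of $G$ in which $g$ is a single-edge ear.
\item By your own argument $g$ is removable, contradicting minimality.
\end{enumerate}
Since $|V(G)|$ is even, $(3|V(G)|-4)/2$ is an integer, so $|E(G)|\le(3|V(G)|-6)/2$.
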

Moreover, Lov\'{a}sz and Plummer \cite{LP77} showed that the upper bound of Theorem \ref{thm:minimal-bi-MC} is sharp for all $|V(G)| \ge 4$. 
Furthermore,  Mallik, Diwan and Kothari \cite{MDK2026}  characterized minimal matching covered bipartite graphs $G$ with 
$|E(G)|=5(|V(G)|-2)/2$ in terms of special trees. 
We focus on the maximum number of edges in minimal matching covered nonbipartite graphs in this paper. The following are our main results.
\begin{thm}\label{thm:main-thm}
    Let $G$ be a minimal matching covered nonbipartite graph  with at least 6 vertices. Then $|E(G)|\le 5(|V(G)|-2)/2$ and this bound is sharp for all $|V(G)|\ge6$.
\end{thm}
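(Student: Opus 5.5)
The plan is to use the tight cut decomposition, as the keywords suggest. I will induct on $|V(G)|$. First suppose $G$ has a nontrivial tight cut $C=\partial(X)$. Form the two $C$-contractions $G_1=G/\overline{X}$ and $G_2=G/X$. Each is matching covered, and $|V(G_1)|+|V(G_2)|=|V(G)|+2$ and $|E(G_1)|+|E(G_2)|=|E(G)|+|C|$. The first thing to establish is that minimality passes to the contractions. I expect this to hold because an edge $e$ not in $C$ is removable in $G$ iff it is removable in the contraction containing it, which is a standard fact for tight cuts. Edges of $C$ need separate care: a removable edge of $C$ in $G_1$ can fail to be removable in $G$ only if it is non-removable in $G_2$. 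So I will apply the induction bound to whichever contraction keeps minimality, and for the other one control the edges of $C$ directly.

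The second ingredient is a bound on $|C|$ at a contraction vertex. In a minimal matching covered graph, a contraction vertex $\bar x$ of $G_1$ satisfies $d_{G_1}(\bar x)=|C|$. I would show $|C|$ is small compared with the gain from induction. Since $G$ is nonbipartite, at least one contraction is nonbipartite. In the nonbipartite one I apply the bound $5(n_i-2)/2$. In a bipartite one I apply Theorem~\ref{thm:minimal-bi-MC}, which gives $(3n_i-6)/2$, or the trivial count when $n_i$ is small (for example $K_2$ with multiple edges, or $C_4$). The arithmetic is
\[
\tfrac52(n_1-2)+\tfrac52(n_2-2)-|C| = \tfrac52(n-2)-|C|+\tfrac52\cdot 2-5 = \tfrac52(n-2)-|C|,
\]
which is comfortably below the target once $|C|\ge 1$. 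One catch is that a small contraction, for instance one on 4 vertices, needs its own bound. I will tabulate the small minimal matching covered graphs ($K_4$, the triangular prism $\overline{C_6}$, and the bipartite ones) and check that the inequality still closes.

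The main case, and the main obstacle, is when $G$ has no nontrivial tight cut, so $G$ is a brick. Here I need the bound $|E(G)|\le 5(n-2)/2$ for minimal bricks. A minimal brick is minimal matching covered, since a removable edge of a brick with no removable edge would contradict the brick having no $b$-invariant removable edge in the relevant sense. For this I would invoke the known results on minimal bricks: they have a vertex of degree 3 (Norine–Thomas, de Carvalho–Lucchesi–Murty) and edge count at most $5n/2-7$ by the literature. Alternatively I would derive it from a structural argument: in a brick every non-removable edge is "forced" by a barrier-type obstruction, giving a charge argument that each vertex has few incident non-removable edges. The delicate point is that a minimal matching covered brick need not be a minimal brick in the $b$-invariant sense. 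Since I must show that \emph{no} edge is removable, I would prove directly that the set of non-removable edges in a brick has size at most $5(n-2)/2$, first trying a counting via the dependence classes and barriers of $G-e$.

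Sharpness is a construction. I would take a family such as $K_4$ with $(n-4)/2$ bipartite gadgets attached via tight cuts, each adding 2 vertices and 5 edges: each step adds 5 edges per 2 vertices, matching the slope $5/2$. For odd-parity issues, $n$ is even anyway. I would verify minimality via the tight cut decomposition, since every piece is a minimal brace/brick with non-removable cut edges. Because $n$ must be even, the bound $5(n-2)/2$ is an integer, and I need the construction to hit it at $n=6$ (for example $\overline{C_6}$ plus extra edges, or $K_4$ with a bipartite ear, giving 10 edges).
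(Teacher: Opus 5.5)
\textbf{There is a genuine gap.} Your induction uses a hypothesis that the $C$-contractions do not inherit, so it does not close. By the tight-cut lemma, $G-e$ is matching covered iff both $G_1-e$ and $G_2-e$ are. Writing $E_N$ for the set of nonremovable edges, this says $E_N(G)=E_N(G_1)\cup E_N(G_2)$. Minimality of $G$ therefore only says that each edge of $C$ is nonremovable in \emph{at least one} contraction, so neither contraction need be minimal.

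This already fails for the extremal graphs. Take a matching $u_1v_1,\dots,u_kv_k$ plus two vertices $a,b$ joined to every $u_i,v_i$; this graph is minimal, with $5k$ edges on $2k+2$ vertices. Then $C=\partial(\{a,u_1,v_1\})$ is a nontrivial tight cut. One contraction has $2k-2\ge 2$ parallel edges from $a$ to the contraction vertex, and the other has two parallel edges from $b$ to the contraction vertex; all of these are removable there. So there may be no ``contraction that keeps minimality.'' Your displayed estimate, which uses $\tfrac52(n_i-2)$ for \emph{both} $G_1$ and $G_2$, assumes exactly what is missing. For the same reason, the Lov\'asz--Plummer bipartite bound cannot be applied to bipartite pieces.

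The missing idea is to strengthen the claim to \emph{all} matching covered graphs on $n\ge 6$ vertices: $E_N(G)$ lies in a spanning subgraph $H$ with $\delta(H)\ge 1$ and $|E(H)|\le (5n-10)/2$. This statement is inductive via $E_N(G)=E_N(G_1)\cup E_N(G_2)$, at the cost of losing your $-|C|$ saving. When one contraction has only $4$ vertices, the naive count $\tfrac52(n-4)+6$ is one too many. The condition $\delta(H)\ge1$ fixes this: it supplies an edge of $C$ in $H_1$ that can also be kept in the underlying simple graph $H_2$ ($C_4$ or $K_4$) of the small contraction, so that edge is counted only once.

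You leave the brick case open, and you have the relevant implication reversed. A brick with no removable edge is automatically a minimal brick. A minimal brick, however, can have removable edges: every edge of the Petersen graph is removable. What the induction actually needs is a bound on $E_N(G)$ for an \emph{arbitrary} brick $G$. Take a minimal bicritical spanning subgraph $H$ of $G$. Every $f\notin E(H)$ is removable, because $G-f\supseteq H$ is still bicritical and hence matching covered. So $E_N(G)\subseteq E(H)$, and $|E(H)|\le (5n-10)/2$ by the Lov\'asz--Plummer bound for minimal bicritical graphs; this is where the constant $5(n-2)/2$ comes from. Braces on at least $6$ vertices have $E_N(G)=\emptyset$, since every edge is removable.

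Finally, sharpness is not yet established. The gadget family is unspecified. Your proposed check that every tight-cut piece is a minimal brick or brace fails for the extremal graphs above, whose pieces are $K_4$'s with parallel edges. Also, $\overline{C_6}$ plus one edge is not minimal: adding $a_1b_2$ to the prism with triangles $a_1a_2a_3$, $b_1b_2b_3$ and rungs $a_ib_i$ makes $b_2b_3$ removable. The graphs described above attain $5(n-2)/2$ for every even $n\ge 6$.
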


\begin{remark} {\rm 
The upper bounds in  Theorem \ref{thm:main-thm} are sharp for graphs with at least six vertices. 
Let the graph $G_{k}$ $(k\ge2)$ be obtained from a matching $\{u_iv_i:1\le i\le k\}$ and two isolate vertex $a$ and $b$ by adding the edge set $\{au_i,av_i,bu_i,bv_i:i=1,2,\dots, k\}$ (see Figure \ref{fig:sharp-exp} when $k=3$). 
Since $G_k$ is connected\, and for each $i\in\{1,\ldots,k\}$, 
$\{au_i,bv_i\}\cup \{u_jv_j:j\neq i\}$ is a perfect matching of $G_k$, $G_k$ is matching covered. 
Moreover, for each $i\in\{1,\ldots,k\}$, $u_iv_i$ is not removable in $G_k$ (as $au_j$ is not allow in $G_k-u_iv_i$ for every $j\neq i$), and $au_i$ is not removable in $G_k$ (as $bv_i$ is not allow in $G_k-au_i$), and $bv_i$ is not removable in $H_k$ (as $au_i$ is not allow in $H_k-bv_i$).  Since the statements also hold when $a$ and $b$ are interchange, every edge of $H_k$ is not removable.
Hence $G_k$ is a minimal matching covered graph. 
Note that $d_{G_k}(a)=2k=d_{G_k}(b)$ and for each $1\le i\le k$,  $d_{G_k}(u_i)=3=d_{G_k}(v_i)$. 
Since $|V(G_k)|=2k+2$, we have  $|E(G_k)|=5k=5(|V(G_k)|-2)/2$. 
}
    \begin{figure}[!h]
    \centering
    \includegraphics[totalheight=3.5cm]{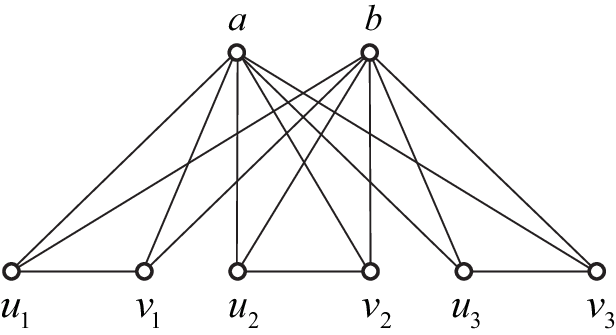}
    \caption{The graph $G_{3}$.}
    \label{fig:sharp-exp}
\end{figure}
\end{remark}

 \section{Preliminaries}
A component with odd (even) number of vertices is called an {\em odd (even) component}. A component is {\em trivial} if it contains exactly one vertex. 
We denote by $o(G)$ the number of odd components of the graph $G$. 
A vertex subset $B$ of a graph $G$ that has a perfect matching is a {\em barrier} if $o(G-B)=|B|$. A barrier is {\em trivial} if it contains at most one vertex. 
 Tutte  proved the following classical theorem in 1947.
\begin{thm}[\cite{Tutte47}]\label{thm:Tutte}
    A graph $G$ has a perfect matching if and only if $o(G-S)\le|S|$, for every $S\subseteq V(G)$.
\end{thm}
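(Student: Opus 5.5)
Necessity is immediate. Suppose $G$ has a perfect matching $M$ and let $S\subseteq V(G)$. Each odd component $C$ of $G-S$ cannot be perfectly matched inside itself. So some vertex of $C$ is matched by $M$ to a vertex outside $C$, and since there are no edges between distinct components of $G-S$, that vertex lies in $S$. Different odd components use different vertices of $S$, because $M$ is a matching. Hence $o(G-S)\le|S|$.

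For sufficiency I follow the standard maximal-counterexample argument. Assume $G$ satisfies $o(G-S)\le|S|$ for all $S$ but has no perfect matching.
\begin{itemize}
\item Taking $S=\emptyset$ shows $|V(G)|$ is even.
\item Adding an edge never increases $o(G-S)$, since merging two odd components gives an even one. So I may add edges while keeping both the condition and the absence of a perfect matching.
\item Multiple edges and loops are irrelevant here, so I work with the underlying simple graph and take $G$ edge-maximal with these properties. Then $G+e$ has a perfect matching for every non-edge $e$.
\end{itemize}

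Let $U$ be the set of vertices adjacent to all other vertices. The plan is to show that every component of $G-U$ is complete.

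\begin{itemize}
\item \textbf{Case 1: every component of $G-U$ is complete.} Apply the condition with $S=U$, so $o(G-U)\le|U|$. Match one vertex of each odd component to a distinct vertex of $U$. Pair up the remaining vertices inside each component, which is possible because each component is complete and an even number of its vertices remain. The leftover vertices of $U$ are even in number by parity and are pairwise adjacent, so they can be paired off. This gives a perfect matching, a contradiction.
\item \textbf{Case 2: some component of $G-U$ is not complete.} This is the step I expect to be the main obstacle. The component contains vertices $x,z$ with a common neighbour $y$ and $xz\notin E(G)$. Since $y\notin U$, there is a vertex $w$ with $yw\notin E(G)$. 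By maximality, $G+xz$ has a perfect matching $M_1$, which must contain $xz$, and $G+yw$ has a perfect matching $M_2$, which must contain $yw$. Consider $M_1\triangle M_2$, a disjoint union of even alternating cycles. The edges $xz$ and $yw$ lie on cycles of it.
\begin{itemize}
\item If they lie on different cycles, switch $M_2$ along the cycle containing $yw$. This yields a perfect matching of $G$.
\item If they lie on the same cycle $C$, traverse $C$ from $y$ away from $w$ until the first vertex of $\{x,z\}$ is reached; say it is $x$. Combine the path of $C$ from $y$ to $x$, together with the edge $xy$, with $M_1$ on the rest of $C$ and off $C$. This produces a perfect matching of $G$ avoiding both added edges.
\end{itemize}
Either way we contradict the assumption, so Case 2 cannot occur and Case 1 applies.
\end{itemize}

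The care needed in Case 2 is in the same-cycle subcase. There I must check parity along $C$: the chosen path must begin and end with the correct matching colour so that $xy$ closes it into an alternating structure covering every vertex of $C$ exactly once.
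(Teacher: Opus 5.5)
Your proposal follows the standard Lov\'asz/West maximal-counterexample proof. The paper only cites this theorem from Tutte and gives no proof, so there is nothing in the paper to compare against. Most of your steps are correct: necessity, the parity of $|V(G)|$, the monotonicity of $o(G-S)$ under adding edges, Case 1, and the different-cycles subcase. The gap is the same-cycle subcase, which you flagged yourself. With the direction you prescribe, the parity is wrong and the construction always fails.

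\textbf{Why it fails.} On $C$ the edges alternate between $M_1$ and $M_2$. The two $C$-edges at $y$ are $yw\in M_2$ and the $M_1$-edge $yu$, where $u\notin\{x,z\}$ because $xz\in M_1$. Leaving $y$ away from $w$ means starting with $yu\in M_1$. The first vertex of $\{x,z\}$ you meet, say $x$, is entered through its $M_2$-edge, since its $M_1$-edge is $xz$ and $z$ has not yet been visited. So the path $P$ from $y$ to $x$ starts with an $M_1$-edge and ends with an $M_2$-edge. Hence $P$ has even length, $P+xy$ is an odd cycle, and the interior of $P$ has an odd number of vertices. Once $x$ and $y$ are matched to each other, that interior cannot be covered by edges of $C$. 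The complementary path $x\,z\cdots w\,y$ has the same defect, and taking $M_1$ on it would even use $xz\notin E(G)$.

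For a concrete instance, take $C=y\,w\,a\,z\,x\,b\,y$ with $M_2\cap E(C)=\{yw,az,xb\}$ and $M_1\cap E(C)=\{wa,zx,by\}$. Your rule produces the path $y\,b\,x$, and after using $xy$ the vertex $b$ cannot be covered.

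\textbf{The fix.} Traverse from $y$ \emph{along} $yw$ instead. Suppose $z$ is the first vertex of $\{x,z\}$ reached.
The path $P=y\,w\cdots z$ begins and ends with $M_2$-edges, so it has odd length. Take $M_1\cap E(P)$, which covers the interior of $P$ and avoids $yw$, together with the edge $yz\in E(G)$.
The remaining path $Q=z\,x\cdots y$ begins and ends with $M_1$-edges. Take $M_2\cap E(Q)$, which covers its interior and avoids $xz$.
Adding $M_1$ outside $C$ gives a perfect matching of $G$.

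Equivalently, keeping your direction, you must join $y$ to the \emph{second} vertex of $\{x,z\}$ encountered, not the first. In the example this gives $\{wa,yz,xb\}$. With this correction the argument is complete.
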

Using Theorem \ref{thm:Tutte}, we have the following properties about matching covered graphs.
\begin{cor}[\cite{Lovasz87}]\label{cor:M-C-without-even-components}
  Let $G$ be a matching covered graph. Every nonempty barrier $B$ is independent, and $G-B$ has no even components. 
\end{cor}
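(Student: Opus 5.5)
Corollary \ref{cor:M-C-without-even-components} follows from Theorem \ref{thm:Tutte} by a parity-and-counting argument, applied to a nonempty barrier $B$ of a matching covered graph $G$. The plan is to first record two facts about $G-B$ and then treat the two claims separately, each by contradiction using a specific edge and a specific perfect matching.

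\emph{Preliminary facts.} Since $G$ has a perfect matching, $|V(G)|$ is even. Since $o(G-B)=|B|$, the number $|V(G)|-|B|$ has the same parity as $o(G-B)$. Now let $M$ be any perfect matching of $G$. Each odd component of $G-B$ sends at least one $M$-edge to $B$. There are exactly $|B|$ odd components, and each vertex of $B$ is covered by a single $M$-edge. Hence every vertex of $B$ is matched by $M$ into a distinct odd component, and no $M$-edge has both ends in $B$.

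\emph{Independence of $B$.} Suppose some edge $e=xy$ has $x,y\in B$. Because $G$ is matching covered, $e$ lies in some perfect matching $M$. This contradicts the fact that no $M$-edge has both ends in $B$. Equivalently: $x$ and $y$ are matched to each other, so at most $|B|-2$ vertices of $B$ remain for the $|B|$ odd components, each of which needs one.

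\emph{No even components.} Suppose $D$ is an even component of $G-B$.
\begin{enumerate}
\item Since $G$ is connected and $B\neq\emptyset$, some edge $e=uv$ joins $D$ to $B$, with $u\in D$ and $v\in B$. If instead $B=V(G)\setminus V(D)$ had no neighbours in $D$, then $G$ would be disconnected, which is impossible.
\item Take a perfect matching $M$ containing $e$.
\item By the preliminary facts, $M$ matches every vertex of $B$ into a distinct odd component. Counting, the $|B|$ vertices of $B$ are used up by the $|B|$ odd components, so none is left for $D$.
\item Thus $v\in B$ is matched into $D$, leaving some odd component $C$ with no $M$-partner in $B$. But $C$ has odd order and cannot be perfectly matched within itself, a contradiction.
\end{enumerate}

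\emph{Main obstacle.} The only delicate point is step 1: guaranteeing that an edge between $D$ and $B$ exists. This is exactly where connectivity of $G$ and the hypothesis $B\neq\emptyset$ are used. Everything else is a direct consequence of Theorem \ref{thm:Tutte}'s counting.
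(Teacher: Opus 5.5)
Your argument is correct and is the standard one: every perfect matching pairs $B$ bijectively with the odd components of $G-B$. Hence no edge inside $B$, and no edge from $B$ to an even component, is allowed, and connectivity with $B\neq\emptyset$ then rules out even components (the paper gives no proof of its own, only the citation to Lov\'asz and the remark that it follows from Theorem~\ref{thm:Tutte}). The only blemishes are the slip ``$B=V(G)\setminus V(D)$'' in step~1, which should say that $D$ has no neighbour in the nonempty set $V(G)\setminus V(D)\supseteq B$, and the redundancy of step~4, since step~3 already gives the contradiction ($v$ is matched to $u\in D$ rather than into an odd component).
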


A graph $G$ with four or more vertices is {\em bicritical} if for any two distinct vertices $u$ and $v$ in $G$, $G-\{u, v\}$ has a perfect matching. Obviously, every bicritical graph is matching covered.
\begin{pro}[\cite{LP86}]\label{pro:mc_is_Bi}
    A matching covered graph $G$ different  from $K_2$ is bicritical if and only if every barrier of $G$ is trivial.
\end{pro}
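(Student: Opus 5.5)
We prove the two directions separately, and each is a direct application of Theorem \ref{thm:Tutte} together with a parity count. Throughout, $G$ is matching covered, so it has a perfect matching and $|V(G)|$ is even. We read ``different from $K_2$'' as excluding every graph on two vertices, multiple edges included, so $|V(G)|\ge 4$ and the definition of bicritical applies.

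\emph{Bicritical $\Rightarrow$ every barrier is trivial.} Suppose $G$ is bicritical and let $B$ be a barrier with $|B|\ge 2$. Choose distinct $u,v\in B$; by bicriticality, $G-\{u,v\}$ has a perfect matching. Set $S=B\setminus\{u,v\}$. Then
\[
(G-\{u,v\})-S=G-B,
\]
so $o\bigl((G-\{u,v\})-S\bigr)=o(G-B)=|B|=|S|+2>|S|$. This contradicts Theorem \ref{thm:Tutte} applied to $G-\{u,v\}$. Hence every barrier has at most one vertex.

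\emph{Every barrier is trivial $\Rightarrow$ bicritical.} Suppose instead that $G$ is not bicritical. Then there are distinct vertices $u,v$ such that $G-\{u,v\}$ has no perfect matching. By Theorem \ref{thm:Tutte} there is $S'\subseteq V(G)\setminus\{u,v\}$ with $o(G-\{u,v\}-S')>|S'|$.

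The key step is a parity observation. Since $|V(G)-\{u,v\}|$ is even, for any $X\subseteq V(G)\setminus\{u,v\}$ the number of odd components of $G-\{u,v\}-X$ has the same parity as $|X|$. Therefore the strict inequality upgrades to
\[
o(G-\{u,v\}-S')\ge |S'|+2 .
\]
Now put $B=S'\cup\{u,v\}$, so $|B|=|S'|+2$ and $o(G-B)\ge |B|$. On the other hand, $G$ has a perfect matching, so Theorem \ref{thm:Tutte} gives $o(G-B)\le |B|$. Hence $o(G-B)=|B|$, and $B$ is a barrier with $|B|\ge 2$. This contradicts the hypothesis that every barrier is trivial.

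There is no real obstacle here. The only points needing care are the parity argument that turns ``$>$'' into ``$\ge |S'|+2$'', and the convention that excludes two-vertex graphs. Corollary \ref{cor:M-C-without-even-components} is not needed.
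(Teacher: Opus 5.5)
Your proof is correct. Both directions are the standard derivation of this fact from Tutte's theorem, including the parity step that upgrades $>$ to $\ge |S'|+2$; the paper gives no proof of its own and only cites \cite{LP86}. Your reading of ``different from $K_2$'' as excluding every two-vertex graph is the right one in this multigraph setting: two vertices joined by parallel edges are matching covered and have only trivial barriers, but are not bicritical.
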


A bicritical graph $G$ is {\em minimal} if $G-e$ is not bicritical for every $e\in E(G)$. 
\begin{lem}[\cite{LP86}]\label{lem:mnimal_bicritical}
    If $G$ is a minimal bicritical simple graph with at least 6 vertices, then $|E(G)|\le \frac{5|V(G)|-10}{2}$.
\end{lem}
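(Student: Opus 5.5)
The plan is to prove the bound by induction on $|V(G)|$. Each edge is tested against the minimality of $G$, which produces a Tutte obstruction. The obstruction then either makes $G$ almost bipartite, so that Theorem~\ref{thm:minimal-bi-MC} applies, or supplies a nontrivial odd cut along which $G$ splits into two smaller bicritical graphs. The inequality $|E(G)|\le (5|V(G)|-10)/2$ is exactly additive under the right kind of splitting, and I will aim to exploit that. The hypothesis $|V(G)|\ge 6$ matters because $K_4$ is minimal bicritical with $6>5$ edges. So the base cases and the splitting step must avoid producing $K_4$ pieces, or must account for them explicitly; the triangular prism, with $9\le 10$ edges, is the natural base case on six vertices.

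\emph{Step 1: the obstruction attached to an edge.} Fix an edge $e=xy$ of $G$. Since $G-e$ is not bicritical, there are distinct vertices $u,v$ such that $G-e-u-v$ has no perfect matching. By Theorem~\ref{thm:Tutte} there is a set $S\supseteq\{u,v\}$ with $o(G-e-S)>|S|-2$. Parity and the bicriticality of $G$ then force two things: $o(G-e-S)=|S|$, and $e$ joins two distinct odd components of $G-e-S$. I choose $S$ maximal. Then, as in Corollary~\ref{cor:M-C-without-even-components}, $G-e-S$ has no even components and $S$ is independent. Otherwise the edge inside $S$, or the even component, would let us enlarge $S$ or would contradict the fact that $G-s-s'$ has a perfect matching for $s,s'\in S$.

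\emph{Step 2: the case where every odd component is trivial.} Suppose every odd component of $G-e-S$ is a single vertex, and let $T$ be the set of these vertices. Then $G-e$ is bipartite with parts $S$ and $T$, and $|S|=|T|$. Because $G$ is bicritical, every edge of $G-e$ lies in a perfect matching of $G$ that avoids $e$. So $G-e$ is a matching covered bipartite graph, and minimality of $G$ should make it minimal. If $G-e$ were not minimal, its removable edges could be deleted from $G$ while keeping bicriticality, since the single non-bipartite edge $e$ already destroys every nontrivial barrier. Theorem~\ref{thm:minimal-bi-MC} then gives
\[
|E(G)|\le \frac{3|V(G)|-6}{2}+1\le \frac{5|V(G)|-10}{2}
\]
whenever $|V(G)|\ge 4$.

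\emph{Step 3: a nontrivial odd component, and the main obstacle.} Otherwise some odd component $C$ of $G-e-S$ has $|C|\ge 3$, and $\partial(C)$ is a nontrivial odd cut in $G-e$. I will contract each shore of $\partial(C)$ (with $e$ added back where it belongs) to a single vertex, obtaining $G_1=G/\overline{C}$ and $G_2=G/C$. The task is then to check two properties. First, both $G_1$ and $G_2$ are bicritical; this uses Proposition~\ref{pro:mc_is_Bi}, because a nontrivial barrier in a contraction lifts to one in $G$. Second, both are minimal, because a removable edge of $G_i$ would lift to a removable edge of $G$. Under these two properties, $|V(G_1)|+|V(G_2)|=|V(G)|+2$ and $|E(G_1)|+|E(G_2)|=|E(G)|+|\partial(C)|$.

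The main obstacle is this step. The cut $\partial(C)$ is not tight in $G$, so neither bicriticality nor minimality passes to the contractions automatically. There is also a risk that $|\partial(C)|$ is too large, or that a contraction is $K_4$ and so falls below six vertices. To control this, I would first try choosing $e$ and $S$ so that $C$ is inclusion-minimal among all nontrivial odd components over all edges, using an uncrossing argument on these cuts. The aim is to force $G_1$ to be the prism or to satisfy Step 2 directly. The resulting bounded slack would then be charged against the $+1$ gained in the Step 2 estimate, closing the induction.
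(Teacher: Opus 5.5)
The paper does not prove this lemma; it quotes it from \cite{LP86}. So your proposal has to stand on its own, and it has a genuine gap already in Steps~1--2.

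\textbf{Steps 1--2.} The claim that a maximal $S$ is independent is false. Your justification fails because a perfect matching of $G-s-s'$ is free to use $e$, and in fact it must. Indeed, $G-S$ has only $|S|-2$ odd components, plus the even component formed by the two odd components of $G-e-S$ joined by $e$, and that even component can only be matched internally through $e$.

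Moreover, the situation of Step~2 as you set it up never occurs. Suppose $S$ were independent and $G-e-S$ were a set $T$ of isolated vertices. Then every edge other than $e$ joins $S$ to $T$, $|S|=|T|$, and both ends of $e$ lie in $T$. So no perfect matching of $G$ could contain $e$, contradicting the fact that a bicritical graph is matching covered.

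Hence in the trivial-component case $S$ always spans edges. These edges lie inside a barrier of $G-e$, so they lie in no perfect matching of $G-e$. Thus $G-e$ is not matching covered, your claim that every edge of $G-e$ lies in a perfect matching avoiding $e$ fails, and Theorem~\ref{thm:minimal-bi-MC} cannot be invoked.

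This is exactly where the extremal graphs live. Take $G_k$ with $e=u_1v_1$. The set $S=\{a,b,u_2,\dots,u_k\}$ is a maximal barrier of $G_k-e$, and $G_k-e-S$ consists of $k+1$ isolated vertices. Yet $au_2$ and $bu_2$ lie inside $S$, $au_2v_2$ is a triangle of $G_k-e$, and your Step~2 estimate would give $5k=|E(G_k)|\le 3k+1$, which is false.

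\textbf{Step 3.} This step is a plan rather than an argument, and it is where all the remaining difficulty sits. Since $\partial(C)$ is not tight, nothing you cite transfers barriers or removable edges between $G$ and $G/C$, $G/\overline{C}$. That transfer is a tight-cut phenomenon (compare Lemma~\ref{lem:re_also_re}). So you have no reason for the contractions to be bicritical, let alone minimal. They may also acquire parallel edges, while the induction hypothesis concerns simple graphs. Nothing prevents a piece from having only four vertices, for example being $K_4$, which violates the bound. The inclusion-minimal choice of $C$ and the uncrossing are never carried out. Finally, the slack is to be charged against the $+1$ of an estimate that, as shown above, is not valid.

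Two smaller points. First, a large $|\partial(C)|$ is harmless, since it is subtracted in $|E(G)|=|E(G_1)|+|E(G_2)|-|\partial(C)|$. Second, the prism is not the only six-vertex base case: $G_2$ is minimal bicritical with $10$ edges, so the bound is already tight there.

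To make this approach work you would need a way to count the edges spanned by $S$ in the trivial-component case. You would also need a splitting along which minimal bicriticality provably passes to both pieces.
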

 
 For nonempty sets $X$ and $Y$, the notation $X \subseteq Y$ means that $X$ is a subset of $Y$, while $X \subset Y$ means that $X \subseteq Y$ with $X \neq Y$.  
For a nonempty proper subset $X\subset V(G)$, by $\partial_G(X)$ we mean the set of edges with exact one end vertex in $X$ and the other end vertex in $\overline{X}:=V(G)\setminus X$. 
We call $\partial_G(X)$ an {\em edge cut} of $G$. If $X = \{u\}$, then we denote $\partial_G(\{u\})$, for brevity,  by $\partial_G(u)$.  (If $G$ is understood, the subscript $G$ will be omitted.) 
An edge cut $\partial(X)$ is {\em trivial} if $|X| = 1$ or $|\overline{X}| = 1$. 
An edge cut $\partial(X)$ is {\em tight} in $G$ if $|\partial(X)\cap M|=1$ for every perfect matching $M$ of $G$. 
Let $\partial(X)$ be an edge cut of $G$.
Denoted by $G/(X\to x)$, or simply $G/X$, the graph obtained from $G$ by contracting $X$ to a singleton $x$ and removing any resulting loops. The graphs $G/X$ and $G/\overline{X}$ are {\em $\partial(X)$-contractions} of $G$.
 \begin{lem}[\cite{Lovasz87}]\label{lem:C-contractions-MC}
   For any nontrivial tight cut $\partial(X)$ of a matching covered graph $G$, both $\partial(X)$-contractions of $G$ are matching covered.
\end{lem}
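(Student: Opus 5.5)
The statement to prove is Lemma~\ref{lem:C-contractions-MC}, a classical fact; I would prove it directly from the definitions. Let $\partial(X)$ be a nontrivial tight cut of the matching covered graph $G$, and consider $G_1=G/(\overline{X}\to \bar x)$. By symmetry the same argument handles $G/X$. I must show that $G_1$ is connected, has at least two vertices, and that every edge of $G_1$ lies in a perfect matching of $G_1$.

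\textbf{Parity of the shores.} Since $G$ has a perfect matching $M$ and $|M\cap\partial(X)|=1$, exactly one vertex of $X$ is matched outside $X$. Hence $|X|$ is odd, and likewise $|\overline{X}|$ is odd.

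\textbf{Perfect matchings descend.} For any perfect matching $M$ of $G$, let $e$ be the unique edge of $M\cap\partial(X)$. The image of $M\cap(E(G[X])\cup\{e\})$ in $G_1$ is a perfect matching of $G_1$. Indeed, every vertex of $X$ is covered exactly once, and $\bar x$ is covered exactly once, by the image of $e$.

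\textbf{Every edge of $G_1$ is allowed.} The edges of $G_1$ are the edges of $G[X]$ together with the images of edges of $\partial(X)$; loops are removed, and no other edges remain. Take an edge $f$ of $G_1$ and let $f'$ be its preimage in $G$. Because $G$ is matching covered, some perfect matching $M$ of $G$ contains $f'$.
\begin{itemize}
\item If $f'\in E(G[X])$, then $f'$ survives in the restricted matching constructed above.
\item If $f'\in\partial(X)$, then tightness forces $f'$ to be the unique cut edge of $M$, so its image lies in the induced perfect matching of $G_1$.
\end{itemize}
Multiple edges cause no trouble, since each is treated individually.

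\textbf{Connectivity and size.} $G_1$ has $|X|+1\ge 2$ vertices. It is connected because contracting a vertex set in a connected graph preserves connectivity.

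\textbf{Where the hypotheses are used.} The only subtle step is the edge case $f'\in\partial(X)$, which is exactly where tightness is needed. Nontriviality of the cut is not actually required for the conclusion; it only ensures that the contractions are smaller graphs than $G$.
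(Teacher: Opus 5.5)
Your proof is correct. The paper gives no proof of its own (it cites Lov\'asz 1987), and yours is the standard argument: map each perfect matching $M$ of $G$ to $M\cap E(G[X])$ together with the image of the unique edge of $M\cap\partial(X)$, and note that every edge of $G/\overline{X}$ survives in the image of a suitable $M$.

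One small correction to your closing commentary: tightness is not used only in the case $f'\in\partial(X)$. It is already needed in the descent step for every perfect matching, including those containing an edge of $G[X]$, since without $|M\cap\partial(X)|=1$ the vertex $\bar x$ could be covered several times. Your proof does invoke it there, so this affects only the remark, not the argument.
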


 A matching covered  graph that is free of nontrivial tight cuts is  a {\em brick} if it is nonbipartite and a {\em brace}  if it is bipartite. 
 Moreover, a graph $G$ is a brick if and only if $G$ is 3-connected and bicritical \cite{ELP82}.  
 The {\em tight cut decomposition}, due to Lov\'{a}sz \cite{Lovasz87}, can be applied to a matching covered graph  to produce a unique list of bricks and braces up to multiple edges. 

 \begin{lem}[\cite{CLM15}]\label{RE_brace}
     Every edge of a brace with at least 6 vertices is removable.
 \end{lem}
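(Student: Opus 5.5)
The plan is to work with the standard neighbourhood characterisations of matching covered bipartite graphs and of braces, and to show that deleting one edge from a brace on at least $6$ vertices still leaves a graph satisfying the matching covered condition.

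Let $G$ be a brace with bipartition $(A,B)$ and $n:=|A|=|B|\ge 3$. I will use two known facts, which I take from the literature (Lov\'asz--Plummer~\cite{LP86}) rather than from this paper:
\begin{itemize}[leftmargin=2em]
\item[(i)] A connected bipartite graph with a perfect matching is matching covered if and only if $|N(X)|\ge |X|+1$ for every nonempty $X\subset A$ with $|X|<|A|$.
\item[(ii)] A bipartite graph with $n\ge 3$ is a brace if and only if $|N(X)|\ge |X|+2$ for every nonempty $X\subseteq A$ with $|X|\le n-2$, and the same holds with the roles of $A$ and $B$ exchanged.
\end{itemize}
If one wants to avoid quoting (ii), its forward direction, which is all that is needed, follows from the tight cut definition. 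Suppose $X$ violates (ii). Take $X':=X\cup N(X)$. A perfect-matching count gives $|\partial(X')\cap M|=|N(X)|-|X|=1$ for every perfect matching $M$, and the size bounds on $X$ make the cut $\partial(X')$ nontrivial. This contradicts $G$ being a brace.

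Fix an edge $e=ab$ with $a\in A$ and $b\in B$, and put $H:=G-e$. First, a perfect matching of $H$ exists because $G$ is matching covered. Second, the neighbourhood condition (i) will show that $H$ is connected, because a disconnected bipartite graph with a perfect matching has a component whose $A$-side $X$ satisfies $|N(X)|=|X|$. So it suffices to verify (i) in $H$.

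For any $X\subseteq A$ we have $N_H(X)\supseteq N_G(X)\setminus\{b\}$, since only the edge $ab$ is lost. Hence for $1\le |X|\le n-2$, property (ii) gives $|N_H(X)|\ge |X|+2-1=|X|+1$.

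The remaining case, $|X|=n-1$, is the one I expect to need care. Here I must show $N_H(X)=B$. I first record two consequences of (ii) applied on the $B$ side.
\begin{itemize}[leftmargin=2em]
\item Taking a single vertex $Y=\{y\}\subseteq B$ (allowed because $1\le n-2$) gives $d_G(y)\ge 3$. So every vertex of $B$ has degree at least $3$.
\item Let $A\setminus X=\{a'\}$ and suppose some $y\in B$ has no neighbour in $X$ in $H$. Then $N_H(y)\subseteq\{a'\}$.
\end{itemize}
Since $y$ loses at most the one edge $e$, this forces $d_G(y)\le 2$, which contradicts the degree bound. Therefore $N_H(X)=B$, and $|N_H(X)|=n\ge |X|+1$.

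Hence $H$ satisfies (i), so $H=G-e$ is matching covered and $e$ is removable. The only genuine obstacle is justifying the characterisation (ii), and in particular the nontriviality of the cut $X\cup N(X)$ in the sketched derivation. This is exactly where the hypothesis $|V(G)|\ge 6$ enters: for $n=2$ the graph $C_4$ is a brace whose edges are not removable.
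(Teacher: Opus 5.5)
Your proof is correct. The paper does not prove this lemma itself; it is quoted from Carvalho, Lucchesi and Murty. Your route is the standard argument for it: derive $|N(X)|\ge |X|+2$ for $1\le |X|\le n-2$ from the absence of nontrivial tight cuts, note that deleting $e$ lowers each such surplus by at most one, and handle $|X|=n-1$ separately via $|N_G(y)|\ge 3$ for $y\in B$.

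Two small points to tidy up:
\begin{enumerate}
\item In the tight-cut sketch, invoke (i) first, so that a violating $X$ has exactly $|N(X)|=|X|+1$. Otherwise the count $|N(X)|-|X|$ could be $0$.
\item This paper allows parallel edges, so phrase the last step with neighbourhoods rather than degrees. The inclusion $N_H(y)\subseteq\{a'\}$ does not bound $d_G(y)$ when $ya'$ has parallel copies. By contrast, $N_G(y)\subseteq\{a,a'\}$ directly contradicts $|N_G(y)|\ge 3$.
\end{enumerate}
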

 A brace (brick) $G$ is {\em minimal} if $G-e$ is not a brace (brick) for any $e\in E(G)$. 
 Fabres, Kothari and Carvalho \cite{FKC2020} proved that every minimal brace $G$ with at least 12 vertices has at most $(5|V(G)|-20)/2$ edges. Moreover, they provided a complete characterization of minimal braces that meet this upper bound. 
 For every minimal brick $G$ with $|V(G)|\ge10$,  Norine and Thomas \cite{NT2006} proved that $|E(G)|\le(5|V(G)|-14)/2$.

\section{Proof of Theorem \ref{thm:main-thm}}

The following lemma is easy to verify by the definition of matching covered graphs. 
 \begin{lem}[\cite{CLM99}]\label{lem:re_also_re}
    Let $G$ be a matching covered graph with a nontrivial tight cut $C$. Let $G_1$ and $G_2$ be the two $C$-contractions of $G$. Then for an edge $e$ in $G$, $G-e$ is matching covered if and only if $G_1-e$ and $G_2-e$ are matching covered.
\end{lem}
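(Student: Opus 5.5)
We have to prove Lemma \ref{lem:re_also_re}: if $C=\partial(X)$ is a nontrivial tight cut of a matching covered graph $G$, with $G_1=G/(X\to x)$ and $G_2=G/(\overline{X}\to \overline{x})$, then for every edge $e$, $G-e$ is matching covered if and only if both $G_1-e$ and $G_2-e$ are. Here, when $e$ lies inside $X$ (resp. $\overline{X}$), we read $G_1-e$ (resp. $G_2-e$) as $G_1$ (resp. $G_2$) itself. The proof rests on one correspondence between perfect matchings, and then handles connectivity.

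\textbf{Correspondence of perfect matchings.} Since $C$ is tight, every perfect matching $M$ of $G$ meets $C$ in exactly one edge $f$. Then $M\cap E(G_1)$ is a perfect matching of $G_1$, consisting of $f$ (now incident with $x$) together with the edges of $M$ inside $\overline{X}$. Similarly, $M\cap E(G_2)$ is a perfect matching of $G_2$.

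Conversely, let $M_1$ be a perfect matching of $G_1$ and $M_2$ a perfect matching of $G_2$ that use the same edge $f\in C$. Then $M_1\cup M_2$ is a perfect matching of $G$: every vertex of $X$ is covered exactly once by $M_2$, and every vertex of $\overline{X}$ exactly once by $M_1$.

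Deleting $e$ does not affect this correspondence in either direction. The two directions are as follows.

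\textbf{($\Rightarrow$)} Suppose $G-e$ is matching covered. Let $g$ be an edge of $G_1-e$; it is also an edge of $G-e$. Pick a perfect matching $M$ of $G-e$ containing $g$. Then $M$ is a perfect matching of $G$, so it meets $C$ exactly once, and $M\cap E(G_1)$ is a perfect matching of $G_1-e$ containing $g$. Hence every edge of $G_1-e$ is allowed, and likewise for $G_2-e$.

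Connectivity of $G_1-e$ follows because it is a contraction of the connected graph $G-e$. Contracting a connected graph keeps it connected, and removing loops does not change connectivity.

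\textbf{($\Leftarrow$)} Suppose $G_1-e$ and $G_2-e$ are matching covered. Let $g$ be an edge of $G-e$. We may assume $g\in E(G_1)$, i.e.\ $g$ lies in $C$ or inside $\overline{X}$. Choose a perfect matching $M_1$ of $G_1-e$ containing $g$, and let $f$ be the edge of $M_1$ at $x$. The edge $f$ lies in $C$; if $f=e$ it would not be in $G_1-e$, so $f\neq e$ and $f$ is an edge of $G_2-e$. Since $G_2-e$ is matching covered, there is a perfect matching $M_2$ of $G_2-e$ containing $f$. By the correspondence, $M_1\cup M_2$ is a perfect matching of $G-e$ containing $g$.

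\textbf{Connectivity of $G-e$ (main obstacle).} This is the only step needing care. Since $G_2-e$ is connected, $G[X]-e$ together with the vertex $\overline{x}$ is connected. So every component of $G[X]-e$ has an edge of $C\setminus\{e\}$ to $\overline{X}$, and symmetrically every component of $G[\overline{X}]-e$ has such an edge to $X$.

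Moreover, $C\setminus\{e\}$ is nonempty, because $G_1-e$ is matching covered with at least two vertices and so $x$ has an incident edge. We now show that $G-e$ is connected. Every component of $G[X]-e$ or of $G[\overline{X}]-e$ is joined to the other side by some edge of $C\setminus\{e\}$. If $G-e$ were disconnected, some component $K$ of $G-e$ would avoid a given vertex.

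Intersect $K$ with $X$ and with $\overline{X}$; both intersections are nonempty, since each piece reaches across $C$. Then the vertex set of $K$ contracted in $G_2$ (with $\overline{x}$, if $K$ meets $\overline{X}$) gives a separation of $G_2-e$ unless $K\supseteq X$. Likewise $K\supseteq\overline{X}$ by $G_1-e$. Hence $K=V(G)$, so $G-e$ is connected, which completes the equivalence.
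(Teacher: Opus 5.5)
Your perfect-matching correspondence, both allowedness directions, and the connectivity argument in $(\Rightarrow)$ are correct. This is the definitional check the paper has in mind; the paper gives no proof, only the citation and the remark that the lemma follows from the definitions.

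\textbf{The gap.} It is in the connectivity of $G-e$ in the $(\Leftarrow)$ direction. You claim that $(K\cap X)\cup\{\overline{x}\}$ separates $G_2-e$ unless $K\supseteq X$. This is false. The vertex $\overline{x}$ is the image of all of $\overline{X}$, not just of $K\cap\overline{X}$. So any edge of $C\setminus\{e\}$ joining $\overline{X}\setminus K$ to $X\setminus K$ becomes an edge from $\overline{x}$ to $X\setminus K$, and no separation arises.

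In fact, the only information you use, that $G_1-e$ and $G_2-e$ are connected, cannot imply that $G-e$ is connected. Suppose $G-e$ had two components, each meeting both $X$ and $\overline{X}$. The image of each component in $G_2-e$ is connected and contains $\overline{x}$, so $G_2-e$ is connected. The same holds for $G_1-e$. Yet $G-e$ is not connected. So a purely topological argument through the contractions cannot close this step.

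\textbf{The fix.} Use the hypothesis on $G$ itself: a matching covered graph other than $K_2$ has no bridge. Suppose $e$ were a bridge and $G-e$ had components $A$ and $B$.
\begin{itemize}
\item Every perfect matching meets $\partial(A)=\{e\}$ in a number of edges congruent to $|A|$ modulo $2$.
\item If $|A|$ is even, $e$ lies in no perfect matching, so $e$ is not allowed.
\item If $|A|$ is odd, $e$ lies in every perfect matching. When $|A|\ge 3$, connectivity of $G[A]$ gives an edge of $G[A]$ at the end of $e$ in $A$, and that edge is not allowed.
\end{itemize}
Hence $|A|=|B|=1$, contradicting $|V(G)|\ge 6$. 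So $G-e$ is connected outright, with no appeal to the contractions.

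Alternatively, you can stay closer to your approach with a parity argument. If $G-e$ had two components $K$ and $K'$, each would contain an edge of $C\setminus\{e\}$, and exactly one of $|K\cap X|$, $|K'\cap X|$ would be odd. Every perfect matching of $G-e$ would then spend its unique $C$-edge inside the odd one. That makes the $C$-edges inside the other component unallowed in $G-e$, contradicting what you proved in the allowedness step.
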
 
Let $E_N(G)$ denote the set of all nonremovable edges of $G$. 
The following corollary can be obtained by Lemma \ref{lem:re_also_re} directly.
\begin{cor}\label{cor:NR_TC}
    Let $G$ be a matching covered graph with a nontrivial tight cut $C$, and let $G_1$ and $G_2$ denote two $C$-contractions of $G$. Then $E_N(G)=E_N(G_1)\cup E_N(G_2)$.
\end{cor}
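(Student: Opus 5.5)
The corollary follows from Lemma \ref{lem:re_also_re} by taking complements. First fix how the edge sets are identified. Every edge of $G$ lies in $E(G_1)$ or in $E(G_2)$, and the edges of $C$ lie in both. Here $G_1=G/(X\to x)$ and $G_2=G/(\overline{X}\to \overline{x})$, and no edge is lost to a loop, since loops arise only from edges inside the contracted shore. So $E(G)=E(G_1)\cup E(G_2)$ and $E(G_1)\cap E(G_2)=C$. By Lemma \ref{lem:C-contractions-MC}, both $G_1$ and $G_2$ are matching covered, so $E_N(G_i)$ is defined. For $e\in E(G)\setminus E(G_i)$ I read ``$G_i-e$'' as $G_i$ itself. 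This is the convention under which Lemma \ref{lem:re_also_re} is meaningful for every $e\in E(G)$.

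Now take an edge $e$ of $G$. Lemma \ref{lem:re_also_re} says $e$ is removable in $G$ if and only if $G_1-e$ and $G_2-e$ are both matching covered. If $e\notin E(G_i)$, then $G_i-e=G_i$ is matching covered, so the condition on $G_i$ holds automatically. If $e\in E(G_i)$, the condition says exactly that $e$ is removable in $G_i$. Combining the two cases, $e$ is removable in $G$ if and only if $e\notin E_N(G_1)$ and $e\notin E_N(G_2)$. Taking the negation, $e\in E_N(G)$ if and only if $e\in E_N(G_1)\cup E_N(G_2)$.

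For the reverse inclusion, note that $E_N(G_i)\subseteq E(G_i)\subseteq E(G)$, so every edge of $E_N(G_1)\cup E_N(G_2)$ is an edge of $G$ and the previous paragraph applies to it. This gives $E_N(G)=E_N(G_1)\cup E_N(G_2)$.

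The only delicate point is this bookkeeping of edge identities. Edges of $C$ appear in both contractions, and edges inside a shore appear in only one. A multiple edge created by contraction must still be treated as its own edge, so removing it from $G_i$ leaves its parallel copies in place. This is consistent with the paper allowing multigraphs. Once the identification is fixed, nothing beyond Lemma \ref{lem:re_also_re} is needed.
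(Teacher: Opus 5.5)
Your proof is correct and follows the paper's approach. The paper only says that the corollary follows directly from Lemma \ref{lem:re_also_re}. Your complement argument makes that deduction explicit: you adopt the convention $G_i-e=G_i$ for $e\notin E(G_i)$, identify $E(G_1)\cap E(G_2)=C$, and note that $E_N(G_i)\subseteq E(G)$.
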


\begin{lem}\label{lem:H_in_brick}
    Let $G$ be a brace or a brick with at least 6 vertices.
    Then there exists a spanning subgraph $H$ of $G$ such that $E_N(G)\subseteq E(H)$, $\delta(H)\ge1$ and $|E(H)|\le \frac{5|V(G)|-10}{2}$. 
\end{lem}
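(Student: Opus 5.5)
Split into the brace case and the brick case, and in each case use the minimal-structure results quoted in the preliminaries to build $H$.

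\emph{Brace case.} By Lemma \ref{RE_brace}, every edge of a brace with at least 6 vertices is removable, so $E_N(G)=\emptyset$. We therefore only need a spanning subgraph with $\delta(H)\ge 1$ and few edges, and we take $H$ to be a perfect matching of $G$. It exists because $G$ is matching covered. Then $|E(H)|=|V(G)|/2\le (5|V(G)|-10)/2$ whenever $|V(G)|\ge 5/2$, which certainly holds here.

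\emph{Brick case.} Let $G$ be a brick with $n=|V(G)|\ge 6$. Since $G$ is bicritical, we delete edges one at a time while bicriticality is preserved, and stop at a minimal bicritical spanning subgraph $G'$ of $G$. We then do two things.
\begin{enumerate}
\item Show that every nonremovable edge $e$ of $G$ survives in $G'$. Suppose $e\notin E(G')$. Then $G'\subseteq G-e$ is a spanning bicritical subgraph. Every edge of $G-e$ would then lie in a perfect matching: for an edge $xy$, take a perfect matching of $G'-\{x,y\}$ and add $xy$. Also $G-e$ is connected, since $G'$ is bicritical and hence connected. So $G-e$ is matching covered, contradicting $e\in E_N(G)$. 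Hence $E_N(G)\subseteq E(G')$.
\item Bound $|E(G')|$, after handling parallel edges. A minimal bicritical graph is simple: if $G'$ had two parallel edges, deleting one keeps $G'-\{u,v\}$ perfectly matchable for all $u,v$, because the other copy substitutes for it. So $G'$ is a minimal bicritical simple graph with $n\ge 6$ vertices, and Lemma \ref{lem:mnimal_bicritical} gives $|E(G')|\le (5n-10)/2$.
\end{enumerate}
Finally, $\delta(G')\ge 1$, since a bicritical graph on at least 4 vertices has no isolated vertices. Taking $H=G'$ therefore gives all the required properties.

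The main subtlety is multiple edges. If $G$ has parallel edges, one copy might be nonremovable while its twin is dropped when building $G'$. This cannot actually happen: if $e$ has a parallel copy, then $G-e$ still contains that copy and is matching covered exactly when $G$ is, so $e$ is removable. Hence every nonremovable edge has no parallel copy, and step 1 applies to it directly. Apart from this, the argument is routine. The only real content is step 1, that a spanning bicritical subgraph avoiding $e$ certifies that $e$ is removable.
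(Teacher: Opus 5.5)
Your proof is correct and follows the paper's argument. For braces you take a perfect matching, using Lemma \ref{RE_brace} to get $E_N(G)=\emptyset$. For bricks you take a minimal bicritical spanning subgraph obtained by greedy edge deletion, show every deleted edge is removable, and get the edge count from Lemma \ref{lem:mnimal_bicritical}. Your check that a minimal bicritical graph is automatically simple fills in a hypothesis of that lemma which the paper never verifies. Your closing paragraph on parallel edges is unnecessary, since step 1 already shows every edge outside $G'$ is removable.
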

\begin{proof}
    If $G$ is a brace, let $E(H)$ be chosen a perfect matching of $G$. 
$    Clearly, \delta(H)=1$.
    By Lemma \ref{RE_brace}, $E_N(G)=\emptyset$ and hence $E_N(G)\subset E(H)$. 
    Since $|V(G)|\ge6$, we have  $|E(H)|=\frac{|V(G)|}{2}<\frac{|V(G)|+(4|V(G)|-10)}{2}=\frac{5|V(G)|-10}{2}$. 
    Thus, the result holds for braces. 

    If $G$ is a brick, then $G$ is bicritical. 
    If $G$ is a minimal bicritical graph, let $H=G$. 
    Otherwise, $G$ has an edge $e_0$ such that $G-e_0$ is bicritical, then we denote $G_1:=G-e_0$. 
    If $G_1$ is a minimal bicritical graph, let $H=G_1$; otherwise, $G_1$ has an edge $e_1$ such that $G_1-e_1$ is bicritical, then we denote $G_2:=G_1-e_1$. By continuing this operation, we finally obtain a spanning subgraph $H$ of $G$ such that $H$ is a minimal bicritical graph. 
    Let $f\in E(G)\setminus E(H)$.  
    For any two vertices $u,v\in V(H)$, $H-\{u,v\}$ has a perfect matching $M$, as $H$ is bicritical.
    Since $V(H)=V(G)$, $M$ is also a perfect matching of $G-\{u,v\}$. 
    Moreover, as $M\subset E(H)$ and $f\notin E(H)$, we have $f\notin M$ and hence, $M$ is a perfect matching of $(G-f)-\{u,v\}$. 
    So $G-f$ is bicritical and hence matching covered. 
    Thus, $f\in E(G)\setminus E_N(G)$. 
    Therefore, $N_R(G)\subseteq E(H)$. 
    Since $H$ is 2-connected, we have $\delta(H)\ge2$. 
    By Lemma \ref{lem:mnimal_bicritical}, $|E(G)|\le \frac{5|V(G)|-10}{2}$ as $H$ is a minimal bicritical graph with $|V(H)|=|V(G)|\ge6$. 
\end{proof}

\begin{pro}\label{pro:H_in_MC6}
    Let $G$ be a matching covered graph with $|V(G)|=6$.
    Then there exists a spanning subgraph $H$ of $G$ such that $E_N(G)\subseteq E(H)$, $\delta(H)\ge1$ and $|E(H)|\le 10$. 
\end{pro}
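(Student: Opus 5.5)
Since the statement only asks for a spanning subgraph $H$ with $\delta(H)\ge1$ and at most $10$ edges containing all nonremovable edges, my plan is to bound $|E_N(G)|$ and then add a few edges to cover any vertices that $E_N(G)$ misses. Write $n_0$ for the number of vertices of $G$ not incident with an edge of $E_N(G)$. Each such vertex can be covered by one extra edge of $G$, because $G$ is connected. So it suffices to show $|E_N(G)|+n_0\le 10$. One natural way to cover is to take a perfect matching $M$ of $G$ and put $H=E_N(G)\cup M$. This gives $|E(H)|\le |E_N(G)|+3$, and it covers every vertex. So the goal reduces to showing $|E_N(G)\setminus M|\le 7$ for a suitably chosen perfect matching $M$, or more crudely $|E_N(G)|\le 7$. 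When $E_N(G)$ already covers all vertices, the weaker bound $|E_N(G)|\le10$ is enough.

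I would split according to the tight cut structure of $G$. \textbf{Case 1:} $G$ has a nontrivial tight cut $C=\partial(X)$. Then $|X|=3=|\overline X|$. By Lemma \ref{lem:C-contractions-MC}, both $C$-contractions are matching covered graphs on $4$ vertices. By Corollary \ref{cor:NR_TC}, $E_N(G)=E_N(G_1)\cup E_N(G_2)$.

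\textbf{Case 2:} $G$ has no nontrivial tight cut, so $G$ is a brick or a brace. If $G$ is a brace on $6$ vertices, Lemma \ref{RE_brace} gives $E_N(G)=\emptyset$, and a perfect matching works as $H$. If $G$ is a brick on $6$ vertices, I would imitate Lemma \ref{lem:H_in_brick}: repeatedly delete edges while the graph stays bicritical, until a minimal bicritical spanning subgraph $H$ is reached. The same argument shows $E_N(G)\subseteq E(H)$. Bicritical graphs are $2$-connected, so $\delta(H)\ge2$. Since Lemma \ref{lem:mnimal_bicritical} is stated only for simple graphs, one first removes parallel edges. A parallel copy of an edge is removable in a brick, so it never lies in $E_N(G)$. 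With $|V(H)|=6$, Lemma \ref{lem:mnimal_bicritical} then gives $|E(H)|\le 10$.

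The bulk of the work is Case 1. A matching covered graph on $4$ vertices is, up to multiplicity, $C_4$, $K_4$, or $C_4$ plus one chord. The last of these is not matching covered, since the chord's complementary pair is a non-edge. So each $G_i$ has underlying simple graph $C_4$ or $K_4$.

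In $K_4$, a single edge is removable, since $K_4-e$ is still matching covered. Parallel edges are also removable. Hence $E_N(G_i)=\emptyset$ unless the underlying graph is $C_4$, where the at most $4$ simple-edge classes may be nonremovable. It follows that $|E_N(G)|\le 4+4-|E_N(G_1)\cap E_N(G_2)|\le 8$.

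Moreover, two of those cycle edges in each $G_i$ lie in $C$, so they are shared between the two contractions. That gives at most $4+4-1$ edges, and a perfect matching $M$ of $G$ uses exactly one edge of $C$. So I expect $|E(H)|\le 7+3=10$, possibly tighter, although this count needs care.

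The main obstacle I anticipate is exactly this counting in Case 1. One must handle multiple edges of $C$ (a parallel class in $G_i$ may correspond to distinct edges of $G$) and verify precisely which edges of $C_4$ with multiplicities are nonremovable. I would first try to bound $|E_N(G)|+n_0$ directly in each subcase.
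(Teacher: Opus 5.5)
Your Case~2 is fine, and so is your remark about discarding parallel edges before applying Lemma~\ref{lem:mnimal_bicritical}. Case~1, however, rests on a false claim: no edge of $K_4$ is removable.

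If $e\in E(K_4)$ and $e'$ is the edge disjoint from $e$, then $\{e,e'\}$ is the only perfect matching of $K_4$ containing $e'$. So $e'$ is not allowed in $K_4-e$. Hence in a $C$-contraction $G_i$ whose underlying simple graph is $K_4$, every edge of multiplicity one is nonremovable, and $E_N(G_i)$ can have up to six edges.

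Your bound $|E_N(G)|\le 8$ (and a fortiori $\le 7$) therefore fails. The graph $G_k$ of the Remark with $k=2$ is a counterexample:
\begin{itemize}
\item it has six vertices and ten edges and is minimal, so $|E_N(G)|=10$;
\item it has the nontrivial tight cut $\partial(\{a,u_1,v_1\})$;
\item both contractions of that cut have underlying graph $K_4$, with five nonremovable edges each.
\end{itemize}
So the plan ``bound $E_N(G)$, then add a perfect matching'' does not close in Case~1 with the counting you propose. This is a genuine gap, not just a count that ``needs care.''

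The missing idea, which is how the paper argues, is to avoid computing $E_N(G_i)$ at all.
\begin{itemize}
\item Let $H_i$ be an underlying simple graph of $G_i$, taking one representative from each parallel class.
\item Since parallel edges are removable, $E_N(G_i)\subseteq E(H_i)$.
\item Since $H_i$ is $C_4$ or $K_4$, $|E(H_i)|\le 6$ and $\delta(H_i)\ge 2$.
\item Then $H:=H_1\cup H_2$ is spanning, has $\delta(H)\ge 1$, and contains $E_N(G)$ by Corollary~\ref{cor:NR_TC}.
\item Finally, $|E(H)|\le 6+6-2=10$ provided $H_1$ and $H_2$ share two edges of $C$.
\end{itemize}

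To arrange the shared edges, pick $e,f\in C$ with no common end. Such edges exist: otherwise the edges of $C$ would pairwise meet and, $C$ being bipartite, would form a star. That contradicts the fact that $\overline{x}$ has at least two distinct neighbours in $G_1$ and $x$ has at least two in $G_2$. Then $e$ and $f$ are non-parallel in both contractions, so they can be chosen as representatives in both $H_1$ and $H_2$.

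Covering all vertices via $\delta(H_i)\ge 2$ is what removes the need for the extra perfect matching that overshoots your count.
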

\begin{proof}
    If $G$ has no nontrivial tight cuts, then $G$ is a brace or a brick and hence the result holds by Lemma \ref{lem:H_in_brick}. 
    Now we assume that $G$ has a nontrivial tight cut $\partial(X)$. 
    Let $G_1:=G/(\overline{X}\to\overline{x})$ and let $G_2:=G/(X\to x)$. 
    Since $|V(G)|=6$, we have $|V(G_1)|=4=|V(G_2)|$. 
    Let $H_1$ denote the underlying simple graph of $G_1$. 
    Since multiple edges are removable, $E_N(G_1)\subseteq E(H_1)$. 
    Note that $H_1$ is a 4-cycle or $K_4$. 
    Then $\delta(H_1)\ge2$ and $|E(H_1)|\le6$. 
    Let $e,f\in E(H_1)\cap \partial_{H_1}(\overline{x})$. Then $e,f\in E(G_2)\cap \partial_{G_2}(x)$.  
    Let $H_2$ be the underlying simple graph of $G_2$ such that $\{e,f\}\subset E(H_2)$. 
    Then $|E(H_1)\cap E(H_2)|\ge2$. 
    Similar to $H_1$, $E_N(G_2)\subseteq E(H_2)$,  $H_2$ is a 4-cycle or $K_4$, $|E(H_2)|\le6$ and $\delta(H_2)\ge2$. 

    Let $H$ be the subgraph of $G$ with edge set $E(H_1)\cup E(H_2)$. 
    Clearly, $V(H)=V(G)$ and $\delta(H)\ge1$. 
    Since $E_N(G)=E_N(G_1)\cup E_N(G_2)$ by Corollary \ref{cor:NR_TC}, we have $E_N(G)=E_N(G_1)\cup E_N(G_2)\subseteq E(H_1)\cup E(H_2)=E(H)$. 
    Note that  $|E(H_1)|+|E(H_2)|-|E(H_1)\cap E(H_2)|\le 6+6-2=10$. 
    Therefore, the result holds.
\end{proof}
 
\begin{lem}\label{lem:H_in_MC}
    Let $G$ be a matching covered graph with at least 6 vertices.
    Then there exists a spanning subgraph $H$ of $G$ such that $E_N(G)\subseteq E(H)$, $\delta(H)\ge1$ and $|E(H)|\le \frac{5|V(G)|-10}{2}$. 
\end{lem}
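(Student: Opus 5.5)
We prove Lemma \ref{lem:H_in_MC} by induction on $|V(G)|$. Lemma \ref{lem:H_in_brick} supplies the base case of bricks and braces, and Proposition \ref{pro:H_in_MC6} handles $|V(G)|=6$. So let $|V(G)|=n\ge 8$, assume the statement for all matching covered graphs with at least 6 and fewer than $n$ vertices, and assume $G$ has a nontrivial tight cut $C=\partial(X)$; otherwise Lemma \ref{lem:H_in_brick} applies directly. Let $G_1=G/(\overline{X}\to\overline{x})$ and $G_2=G/(X\to x)$, with $|V(G_1)|=n_1$ and $|V(G_2)|=n_2$. Both are even and at least 4, $n_1+n_2=n+2$, and both are matching covered by Lemma \ref{lem:C-contractions-MC}. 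By Corollary \ref{cor:NR_TC}, $E_N(G)=E_N(G_1)\cup E_N(G_2)$. The plan is to build $H_i$ in each $G_i$ and set $E(H)=E(H_1)\cup E(H_2)$, where edges of $C$ are identified across the two contractions.

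For each $G_i$ with $n_i\ge 6$, the induction hypothesis gives a spanning $H_i$ with $E_N(G_i)\subseteq E(H_i)$, $\delta(H_i)\ge1$ and $|E(H_i)|\le (5n_i-10)/2$. For $n_i=4$, as in Proposition \ref{pro:H_in_MC6}, we take $H_i$ to be the underlying simple graph of $G_i$. This is a $4$-cycle or $K_4$, so it has at most $6=(5\cdot4-8)/2$ edges and minimum degree at least $2$, and it contains $E_N(G_i)$ because a parallel edge is always removable. We may also choose which parallel copy is kept, so we can arrange for it to contain a prescribed edge of $C$ among the edges at the contracted vertex.

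Every vertex of $G$ other than the contracted vertices lies in exactly one $G_i$ and has a neighbour in $H_i$; the corresponding edge of $G$ is incident to it, so $\delta(H)\ge1$.

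The counting step is where the work lies. If $n_1,n_2\ge6$, then
\[|E(H)|\le \tfrac{5n_1-10}{2}+\tfrac{5n_2-10}{2}=\tfrac{5(n+2)-20}{2}=\tfrac{5n-10}{2},\]
with no overlap needed. If $n_1=4$ (so $n_2=n-2\ge6$), the bound is
\[|E(H)|\le 6+\tfrac{5(n-2)-10}{2}-|E(H_1)\cap E(H_2)| = \tfrac{5n-10}{2}+1-|E(H_1)\cap E(H_2)|,\]
so we need $H_1$ and $H_2$ to share at least one edge of $C$. This holds because $\delta(H_2)\ge1$ forces $H_2$ to contain some edge of $C$ at $x$, and $H_1$ can be chosen as the underlying simple graph containing that particular parallel copy. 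Both $n_i=4$ is impossible since $n\ge8$.

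The remaining thing to check is that $H_1$ can always include this edge. If its end in $X$ were not adjacent to $\overline{x}$ in $H_1$, it would not be an edge of $G_1$, which is a contradiction; so the simple-graph choice can indeed include this particular copy.
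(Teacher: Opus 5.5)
Your proof is correct and follows the paper's argument step for step: induction through a nontrivial tight cut, with the inductive hypothesis applied to each contraction having at least six vertices. A four-vertex contraction is replaced by its underlying simple graph ($C_4$ or $K_4$), chosen to contain an edge of $C$ that $\delta\ge1$ forces into the other $H_i$, and the count is the same as the paper's (your final paragraph is more roundabout than needed, since every edge of $C$ is an edge of both contractions and can be taken as the representative of its parallel class).
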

\begin{proof}
    We proceed by induction on $|V(G)|$. 
    If $|V(G)|=6$, then the result holds by Proposition \ref{pro:H_in_MC6}. 
    Now suppose that $|V(G)|\ge8$, and that the result hold for all graphs with fewer vertices than $G$.  
    If $G$ is free of nontrivial tight cuts, then $G$ is a brace or a brick, and hence the result holds by Lemma \ref{lem:H_in_brick}. 
    Now we assume that $G$ has a nontrivial tight cut $\partial(X)$. 
    Let $G_1:=G/(\overline{X}\to\overline{x})$ and let $G_2:=G/(X\to x)$. 
    By Lemma \ref{lem:C-contractions-MC}, $G_1$ and $G_2$ are matching covered. 
    Since $|V(G)|\ge8$,  we have $|X|\ge5$ or $|\overline{X}|\ge5$. 
    By interchanging $X$ and $\overline{X}$ if necessary, so that $|X|\ge5$. 
    Then $|V(G_1)|\ge6$. 
    By inductive hypothesis, $G_1$ contains a spanning subgraph $H_1$  such that $E_N(G_1)\subseteq E(H_1)$, $\delta(H_1)\ge1$ and $|E(H_1)|\le \frac{5|V(G_1)|-10}{2}$. 

    If $|V(G_2)|\ge6$, then by inductive hypothesis, $G_2$ contains a spanning subgraph $H_2$ such that $E_N(G_2)\subseteq E(H_2)$, $\delta(H_2)\ge1$ and $|E(H_2)|\le \frac{5|V(G_2)|-10}{2}$. 
    If $|V(G_2)|=4$, let $e\in E(H_1)\cap \partial_{H_1}(\overline{x})$. Then $e\in \partial_{G_2}(x)$. Let $H_2$ denote the underlying simple graph of $G_2$ that contains $e$. 
    Clearly, $V(H_2)=V(G_2)$ and $e\in E(H_1)\cap E(H_2)$.  
    Since multiple edges are removable, we have $E_N(G_2)\subseteq E(H_2)$. 
    Moreover, $H_2$ is a 4-cycle or $K_4$ and hence $\delta(H_2)\ge2$. 
    Thus, for each case, $G_2$ has a spanning subgraph $H_2$ such that $E_N(G_2)\subseteq E(H_2)$ and $\delta(H_2)\ge1$. 
    
    Let $H$ be a subgraph of $G$ with edge set $E(H_1)\cup E(H_2)$. 
    Clearly, $V(H)=V(G)$ and $\delta(H)\ge1$. Since $E_N(G)=E_N(G_1)\cup E_N(G_2)$ by Corollary \ref{cor:NR_TC},  $E_N(G_1)\subseteq E(H_1)$ and $E_N(G_2)\subseteq E(H_2)$, we have $E_N(G)=E_N(G_1)\cup E_N(G_2)\subseteq E(H_1)\cup E(H_2)=E(H)$. 
    We will complete the proof by showing that $|E(H)|\le\frac{5|V(G)|-10}{2}$. 
    If $|V(G_2)|\ge6$, then by inductive hypothesis, $|E(H_2)|\le \frac{5|V(G_2)|-10}{2}$. 
    Hence $|E(H)|=|E(H_1)\cup E(H_2)|\le |E(H_1)|+|E(H_2)|=\frac{(5|V(G_1)|-10)+(5|V(G_2)|-10)}{2}=\frac{5|V(G)|-10}{2}$, the result holds. 
    If $|V(G_2)|=4$, then $H_2$ is a 4-cycle or $K_4$ and hence $|E(H_2)|\le6 $.
    Thus, $|E(H)|=|E(H_1)\cup E(H_2)|= |E(H_1)|+|E(H_2)|-|E(H_1)\cap E(H_2)|\le\frac{(5|V(G_1)|-10)}{2}+6-1=\frac{5|V(G)|-10}{2}$, the result follows.   
\end{proof}
\vspace{1em}

\noindent{\em Proof of Theorem \ref{thm:main-thm}.}  
By Lemma \ref{lem:H_in_MC}, we have $|E_N(G)|\le \frac{|5|V(G)|-10}{2}$. 
Since $G$ is a minimal matching covered graph, we have $E(G)=E_N(G)$. 
Thus, $|E(G)|=|E_N(G)|\le \frac{|5|V(G)|-10}{2}$, the theorem holds. $\hfill\qedsymbol$
\vspace{1em}

 \begin{figure}[!h]
    \centering
    \includegraphics[totalheight=3.5cm]{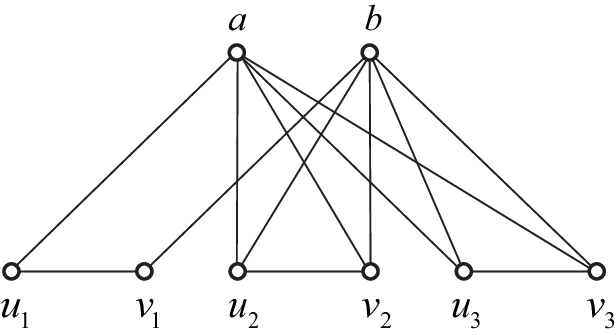}
    \caption{The graph $H_{3}$.}
    \label{fig:sharp-exp-nonbicritical}
\end{figure}
 \begin{remark}
     {\rm
     The graph $G_k$ ($k\ge2)$ that attains the upper bound of Theorem \ref{thm:main-thm} is bicritical. 
     For any minimal matching covered nonbipartite graph $G$ that is not bicritical, $G$ has a nontrivial barrier $B$ by Proposition \ref{pro:mc_is_Bi}. 
     Moreover, since $G$ is not bipartite, $G-B$ has at least one nontrivial odd component $Q$. 
     Let $C:=\partial_G(V(Q))$. In fact,  $C$ is a tight cut of $G$. 
     If the tight cut $\partial(X)$ used in the proof of Theorem \ref{thm:main-thm} is replaced by $C$, we can prove by the same argument that $|E(G)|\le(5|V(G)|-14)/2$. Moreover, this bound is sharp for all $|V(G)|\ge6$. 
     Let $H_{k}:=G_k-av_1-bu_1$  (see Figure \ref{fig:sharp-exp-nonbicritical} when $k=3$). 
     It can be checked that $H_k$ is connected, and $\{au_1,bv_1\}\cup \{u_iv_i:2\le i\le k\}$ and for each $i\in\{2,\ldots,k\}$, $\{au_i,bv_i\}\cup \{u_jv_j:j\neq i\}$ are perfect matchings of $H_k$.
     Hence $H_k$ is matching covered. 
     Moreover, for each $j\in\{1,\ldots,k\}$, $u_jv_j$ is not removable in $H_k$ (as $au_l$ is not allow in $H_k-u_iv_i$ for every $l\neq j$), $au_j$ is not removable in $H_k$ (as $bv_j$ is not allow in $H_k-au_j$), and $bv_j$ is not removable in $H_k$ (as $au_j$ is not allow in $H_k-bv_j$).  Since the statements also hold when $a$ and $b$ are interchange, every edge of $H_k$ is not removable. 
     Hence $H_k$ is a minimal matching covered graph. 
     Since $H_k-\{a,v_1\}$ has exactly two odd components, $\{a,v_1\}$ is a barrier of $H_k$. Thus, $H_k$ is not bicritical by Proposition \ref{pro:mc_is_Bi}.
     Note that $d_{H_k}(a)=2k-1=d_{H_k}(b)$, $d_{H_k}(u_1)=2=d_{H_k}(v_1)$ and for each $2\le i\le k$,  $d_{H_k}(u_i)=3=d_{H_k}(v_i)$. 
     Since $|V(G_k)|=2k+2$, we have  $|E(G_k)|=5k=(5|V(G_k)|-14)/2$. 
     }
 \end{remark}


\begin{thebibliography}{99}

\bibitem{BM08} J. A. Bondy, U. S. R. Murty, Graph Theory, Springer-Verlag, Berlin, 2008.
 

\bibitem{CLM99} M. H. Carvalho, C. L. Lucchesi, U. S. R. Murty, Ear decompositions of matching covered graphs, Combinatorica  19 (2) (1999) 151--174.
 

 


 \bibitem{CLM15} M. H. Carvalho, C. L. Lucchesi, U. S. R. Murty, Thin edges in braces, Electron. J. Combin.  22 (4) (2015) 4--14. 


\bibitem{ELP82} J. Edmonds, L. Lov\'asz, W. R. Pulleyblank, Brick decompositions and the matching rank of graphs, Combinatorica  2 (3) (1982) 247--274. 
 

\bibitem{LP77}
L. Lov\'{a}sz, M. D. Plummer, On minimal elementary bipartite graphs,
J. Combin. Theory, Ser. B  23 (1977) 127--138.
 

\bibitem{LP86} L. Lov\'{a}sz, M. D. Plummer, Matching Theory, Annals of Discrete Mathematics, vol. 29, Elsevier Science, 1986.

\bibitem{Lovasz87} L. Lov\'{a}sz, Matching structure and the matching lattice, J. Combin. Theory, Ser. B  43 (1987) 187--222. 
 

 



\bibitem{Tutte47} W. T. Tutte, The factorization of linear graphs, J. Lond. Math. Soc. 22 (1947) 107--111.
 


\bibitem{MDK2026} A. K. Mallik, A. A. Diwan, N. Kothari, Extremal minimal
bipartite matching covered graphs, J. Combin. 17 (4) (2026) 559--606.

\bibitem{NT2006}  S. Norine, R. Thomas, Minimal bricks, J. Comb. Theory Ser. B 96 (4) (2006) 505--513. 

\bibitem{FKC2020} P. A. Fabres, N. Kothari,
M. H. Carvalho, Minimal braces, J. Graph Theory 96 (4) (2021) 490--509.

\end{thebibliography}
\end{document}